\documentclass[letterpaper, 10 pt, conference]{ieeeconf}  

\IEEEoverridecommandlockouts                              
\usepackage{graphics} 
\usepackage{epsfig} 
\usepackage{mathptmx} 
\usepackage{times} 
\usepackage{amsmath} 
\usepackage{amssymb}  

\usepackage{mathtools}

\usepackage{algorithm}
\usepackage{algorithmic}
\usepackage{xcolor}
\usepackage{hyperref}

\usepackage{color}

\usepackage{blindtext}
\usepackage{enumerate}
\usepackage{graphicx}
\usepackage{amsfonts, amsmath, bm, amssymb}
\usepackage{dsfont}
\usepackage{wrapfig}
\usepackage{subcaption}

\newcommand{\fracpartial}[2]{\frac{\partial #1}{\partial  #2}}

\usepackage{pifont}
\usepackage{xspace}
\makeatletter
\DeclareRobustCommand\onedot{\futurelet\@let@token\@onedot}
\def\@onedot{\ifx\@let@token.\else.\null\fi\xspace}

\makeatother

\newcommand{\Fc}{\mathcal{F}}
\newcommand{\Gc}{\mathcal{G}}

\newcommand{\Kc}{\mathcal{K}}

\newcommand{\Nc}{\mathcal{N}}

\newcommand{\Wc}{\mathcal{W}}

\newcommand{\Rb}{\mathbb{R}}

\newcommand{\Tb}{\mathbb{T}}

\ifx\BlackBox\undefined
\newcommand{\BlackBox}{\rule{1.5ex}{1.5ex}}  
\fi
\ifx\QED\undefined
\def\QED{~\rule[-1pt]{5pt}{5pt}\par\medskip}
\fi
\ifx\proof\undefined
\newenvironment{proof}{\par\noindent{\em Proof:\ }}{\hfill\BlackBox\\}
\fi
\ifx\theorem\undefined
\newtheorem{theorem}{Theorem}
\fi
\ifx\example\undefined
\newtheorem{example}{Example}
\fi
\ifx\property\undefined
\newtheorem{property}{Property}
\fi
\ifx\lemma\undefined
\newtheorem{lemma}{Lemma}
\fi
\ifx\proposition\undefined
\newtheorem{proposition}{Proposition}
\fi
\ifx\fact\undefined
\newtheorem{fact}{Fact}
\fi
\ifx\remark\undefined
\newtheorem{remark}{Remark}
\fi
\ifx\corollary\undefined
\newtheorem{corollary}{Corollary}
\fi
\ifx\definition\undefined
\newtheorem{definition}{Definition}
\fi
\ifx\conjecture\undefined
\newtheorem{conjecture}{Conjecture}
\fi
\ifx\axiom\undefined
\newtheorem{axiom}[theorem]{Axiom}
\fi
\ifx\claim\undefined
\newtheorem{claim}[theorem]{Claim}
\fi
\ifx\assumption\undefined
\newtheorem{assumption}{Assumption}
\fi
\ifx\question\undefined
\newtheorem{question}{Question}
\fi
\ifx\problem\undefined
\newtheorem{problem}{Problem}
\fi

\title{\LARGE \bf
Learning Lyapunov Operators for Nonlinear Systems
}

\author{Amartya Mukherjee, Maxwell Fitzsimmons, David C. Del Rey Fern\'andez, and Jun Liu 
\thanks{This work was supported in part by the Natural Sciences and Engineering Research Council of Canada and the Canada Research Chairs Program.}
\thanks{Amartya Mukherjee, Maxwell Fitzsimmons, David C. Del Rey Fern\'andez, and Jun Liu are with the Department of Applied Mathematics, University of Waterloo, Waterloo, Ontario, Canada N2L 3G1 (email: {\tt\small j.liu@uwaterloo.ca (Jun Liu)}).}%
}

\begin{document}

\maketitle
\thispagestyle{empty}
\pagestyle{empty}

\begin{abstract}
Constructing Lyapunov functions for nonlinear dynamical systems is a central problem in stability analysis, yet remains challenging.
Lyapunov functions are commonly characterized as solutions to first-order partial differential equations (PDEs), but these solutions are typically obtained for single systems, limiting their reuse across systems.
In this paper, we study the Lyapunov solution operator that maps a vector field to the corresponding Lyapunov function defined by a dissipation-based Lyapunov PDE.
We establish that, on compact subsets of the domain of attraction and under exponential stability assumptions, this operator is well-defined, unique, and continuous with respect to perturbations of both the vector field and the dissipation function.
These results provide a theoretical foundation for approximating Lyapunov functions uniformly over families of nonlinear systems.
Building on these theoretical foundations, we employ Fourier Neural Operators (FNOs) as a data-driven approximation of the Lyapunov solution operator.
Numerical experiments demonstrate that a single trained operator can accurately approximate the numerical Lyapunov functions across parameterized families of dynamics.
This illustrates the potential of neural operators for approximating Lyapunov functions.
\end{abstract}

\section{INTRODUCTION}

One of the longstanding challenges in nonlinear systems and control is the construction of Lyapunov functions \cite{khalil2002nonlinear}. While Lyapunov functions can essentially be characterized by solutions to partial differential equations (PDEs) and neural network solutions to such PDEs can effectively provide approximations to Lyapunov functions \cite{liu2025physics}, solving PDEs for each system can still be time-consuming. Additionally, prior works that use neural networks to learn Lyapunov functions only aid in the verification of a single system \cite{mengphysics}, thus failing to generalize into systems with slightly different dynamics. This could pose difficulties in real-life systems.

Recently, operator learning has emerged as a paradigm for approximating mappings between function spaces, such as those defined by PDEs. Unlike conventional deep learning architectures that learn finite-dimensional mappings, neural operators generalize across function spaces, allowing them to learn solution operators of PDEs from data. This perspective is particularly attractive for Lyapunov analysis: the correspondence between vector fields and their associated Lyapunov functions can be viewed as an operator defined by a PDE constraint. Approximating this operator directly offers the promise of learning a single model that can compute Lyapunov functions for a large class of systems.

The Fourier Neural Operator (FNO) is an example of a neural operator \cite{kossaifi2024neural, kovachki2021neural}. It lifts the input functions to a higher-dimensional feature space using a linear layer, followed by repeated applications of Fourier convolution layers. Each layer performs a fast Fourier transform (FFT), applies learnable filters in the frequency domain, and then inverts the transform to return to the spatial domain. This global convolution mechanism enables FNO to efficiently capture long-range dependencies in the input.

In this paper, we bridge the perspectives of Lyapunov stability and operator learning. We begin by formulating the Lyapunov stability condition as a PDE, thereby recasting the problem into the operator learning framework. We then establish regularity and continuity results for this PDE, showing that the assumptions required for the FNO universality theorem \cite{kovachki2021universal} hold in our setting. This provides theoretical justification for approximating Lyapunov operators using FNOs. We finally train an FNO on nonlinear systems and demonstrate that it can generate Lyapunov functions with small error with respect to the true Lyapunov function.

\section{Problem Formulation}

We consider autonomous nonlinear dynamical systems of the form
\begin{equation}
\dot{x}(t) = f(x(t)), \quad x \in \Rb^n,
\end{equation}
where the vector field $f : \Rb^n \to \Rb^n$ is continuously differentiable and satisfies $f(0)=0$. Let $\phi_f(t,x)$ denote the flow of the system initialized at $x$ at time $t=0$.

\subsection{Admissible class of dynamics}

Define the function space
\[
\Fc_0 := \{ f \in C^1(\Rb^n;\Rb^n) \mid f(0)=0 \},
\]
and the subclass of locally exponentially stable vector fields
\[
\Fc_{{st}} := \left\{ f \in \Fc_0 \;\middle|\; \frac{\partial f}{\partial x}(0) \text{ is Hurwitz} \right\}.
\]
For $f \in \Fc_{{st}}$, classical results guarantee that the origin is a locally exponentially stable equilibrium and that solutions exist and are unique in a neighborhood of the origin.

The \emph{domain of attraction} of the origin is defined as
\begin{equation}
{DOA}(f) := \{ x \in \Rb^n \mid \lim_{t\to\infty} \phi_f(t,x) = 0 \}.
\end{equation}
A set $\Omega$ is called positively invariant if $\phi_f(t,x)\in\Omega$ for all $t\geq 0$ and $x\in\Omega$.

\subsection{Dissipation functions and Lyapunov PDE}

Let $\omega : \Rb^n \to \Rb$ be a twice continuously differentiable function satisfying
\begin{equation}\label{eq:omega}
\omega(0)=0, \quad \nabla \omega(0)=0, \quad \nabla^2\omega \succ 0,
\end{equation}
where $\nabla^2\omega$ denotes the Hessian matrix. Define the admissible class
\[
\Wc_{>0} := \left\{ \omega \in C^2(\Rb^n,\Rb) \;\middle|\; \omega \text{ satisfies conditions \eqref{eq:omega}} \right\}.
\]

Given $f \in \Fc_{{st}}$ and $\omega \in \Wc_{>0}$, we consider Lyapunov functions defined as solutions of the first-order PDE
\begin{equation}
\nabla V(x) \cdot f(x) = -\omega(x), \quad V(0)=0.
\label{eq:lyap-pde}
\end{equation}
Equations of this form arise naturally in converse Lyapunov theory and characterize dissipation-based Lyapunov functions.

In applications, local Lyapunov functions are found to prove local asymptotic/exponential stability, but they are also used to estimate the size of $DOA(f)$. This is because if $V$ is a Lyapunov function for $f$ on $K=\{x\in\Rb^n:V(x)<r\}$ for a fixed $r$, then $K$ is positively invariant and $K\subset DOA(f)$. Moreover, there is a Lyapunov function $V_1$ for $f$ such that $\{x\in\Rb^n:V_1(x)<r\}=DOA(f)$ \cite{vannelli1985maximal,zubov1964methods}.

\subsection{Integral representation and well-posedness}

Let $K \subset {DOA}(f)$ be compact and positively invariant. Under this assumption, the Lyapunov PDE~\eqref{eq:lyap-pde} admits the integral representation
\begin{equation}
V_{f,\omega}(x) := \int_0^\infty \omega\bigl(\phi_f(t,x)\bigr)\,dt, \quad x \in K,
\label{eq:lyap-integral}
\end{equation}
which is well defined due to the exponential decay of trajectories and the regularity of $\omega$.

\begin{proposition}[Existence and uniqueness]\label{prop:existence_uniqueness}
Let $f \in \Fc_{{st}}$, $\omega \in \Wc_{>0}$, and let $K \subset {DOA}(f)$ be compact and positively invariant. Then the following are equivalent:
\begin{enumerate}
    \item There exists a unique function $V \in C^1(K)$ satisfying~\eqref{eq:lyap-pde}
    \item This solution is given by the integral formula~\eqref{eq:lyap-integral}.
\end{enumerate}
\end{proposition}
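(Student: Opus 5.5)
The statement is best read as asserting that \eqref{eq:lyap-pde} has exactly one $C^1$ solution on $K$, and that this solution is $V_{f,\omega}$ from \eqref{eq:lyap-integral}. I will establish three facts in turn: $V_{f,\omega}$ is well defined and $C^1$ on $K$; it satisfies the PDE; and any $C^1$ solution must coincide with it.

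\emph{Step 1: uniform exponential decay on $K$.} Since $\frac{\partial f}{\partial x}(0)$ is Hurwitz, there is $P\succ 0$ with $A^\top P + PA = -I$, where $A=\frac{\partial f}{\partial x}(0)$. For some ball $B_\rho$, the quadratic function $x^\top P x$ decreases exponentially along trajectories starting in a sublevel set $\{x^\top Px\le c\}\subset B_\rho$. This gives $|\phi_f(t,y)|\le M e^{-\lambda t}|y|$ for every $y$ in that set.

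For each $x\in K\subset DOA(f)$, the trajectory enters the interior of this sublevel set in finite time. By continuity of the flow and openness, the same holds uniformly on a neighborhood of $x$. Compactness of $K$ then yields a uniform entry time $T$. Since $K$ is compact and positively invariant, the flow is defined for all $t\ge 0$ and stays in $K$. Combining these, $|\phi_f(t,x)|\le C e^{-\lambda t}$ for all $x\in K$ and $t\ge 0$.

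\emph{Step 2: regularity, the main obstacle.} Because $\omega(0)=0$ and $\nabla\omega(0)=0$, we have $|\omega(y)|\le L|y|^2$ near $0$. Hence the integrand in \eqref{eq:lyap-integral} is dominated by $e^{-2\lambda t}$ and $V_{f,\omega}$ is continuous on $K$.

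For $C^1$, I would differentiate under the integral sign. This gives $\nabla V(x)=\int_0^\infty D_x\phi_f(t,x)^\top\nabla\omega(\phi_f(t,x))\,dt$. The hard part is bounding the sensitivity matrix $D_x\phi_f(t,x)$ uniformly. It solves the variational equation $\dot Z = Df(\phi_f(t,x))Z$ with $Z(0)=I$. After time $T$ the trajectory stays near $0$, where $Df$ is close to the Hurwitz matrix $A$. Applying the $P$-norm estimate to $Z$ then gives $\|Z(t)\|\le C' e^{-\lambda' t}$ uniformly on $K$, by the same compactness argument as in Step 1. Together with $|\nabla\omega(y)|\le L'|y|$, the differentiated integrand is dominated by an integrable exponential, so dominated convergence gives $V_{f,\omega}\in C^1(K)$. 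I would handle differentiability at boundary points of $K$ by noting that the estimates hold on an open neighborhood of $K$: the argument of Step 1 applies to a compact neighborhood inside $DOA(f)$, which is open.

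\emph{Step 3: $V_{f,\omega}$ solves the PDE.} By the flow property,
\[
V_{f,\omega}(\phi_f(s,x))=\int_s^\infty \omega(\phi_f(t,x))\,dt .
\]
Differentiating at $s=0$ gives $\nabla V_{f,\omega}(x)\cdot f(x)=-\omega(x)$. Also $V_{f,\omega}(0)=0$ because $\phi_f(t,0)=0$.

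\emph{Step 4: uniqueness.} Let $W\in C^1(K)$ solve \eqref{eq:lyap-pde}, and set $D=W-V_{f,\omega}$. Then $\nabla D\cdot f=0$, so $D$ is constant along trajectories, which stay in $K$ by positive invariance. Hence
\[
D(x)=\lim_{t\to\infty}D(\phi_f(t,x))=D(0)=0
\]
by continuity of $D$ and convergence of the trajectory to $0$. This shows $W=V_{f,\omega}$, so items 1 and 2 hold together.
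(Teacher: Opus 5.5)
Your proof is correct and follows the same structure as the paper's. Your Step 4 is the paper's ``(1) implies (2)'' direction (integrate the PDE along a trajectory in $K$ and let $t\to\infty$, using $\phi_f(t,x)\to 0$ and $V(0)=0$), and your Step 3 is its ``(2) implies (1)'' direction (differentiate $V_{f,\omega}(\phi_f(s,x))=\int_s^\infty \omega(\phi_f(t,x))\,dt$ at $s=0$, which the paper does via the mean value theorem for integrals). The one real difference is that you prove $V_{f,\omega}\in C^1$ yourself, via uniform exponential decay of the flow and of the variational equation together with dominated convergence, whereas the paper simply cites \cite{liu2025physics} for this regularity, so your version is self-contained.
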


\begin{proof}
If (1) holds, then we see that \[\frac{d}{dt}(V_{f,\omega}\circ\phi_f(t,x))=-\omega\circ\phi_f(t,x)\] for $t\geq 0$ and $x\in K$. Integrating from $0$ to $T$, we find
\[V_{f,\omega}\circ\phi_f(T,x)-V_{f,\omega}(x)=\int_0^T-\omega\circ\phi_f(s,x)ds.\]
We note that, as $\omega$ is Lipschitz, the integrated solution is unique.
Taking the limit as $T\to\infty$ yields the result, after noting that $\phi_f(T,x)\to 0$ if $x\in K$ and $V(0)=0$.

If (2) holds then $V_{f,\omega}\circ\phi_f(T,x)-V_{f,\omega}(x)=\int_0^T-\omega\circ\phi_f(s,x)ds.$ By the mean value theorem for integrals, we obtain
\[V_{f,\omega}\circ\phi_f(T,x)-V_{f,\omega}(x)=-T\omega\circ\phi_f(s',x)\]
for $s'\in[0,T]$. Dividing both sides by $T$ and taking the limit $T\to 0$ yields the result.
Finally, $V$ is continuously differentiable on $K$ as proved by \cite{liu2025physics}.
\end{proof}

As a consequence, $V_{f,\omega}$ is a Lyapunov function for $f$ on $K$, and its sublevel sets define invariant subsets of the domain of attraction.

\subsection{The Lyapunov solution operator}

Rather than constructing Lyapunov functions on a per-system basis, we adopt an operator-theoretic viewpoint. Define the \emph{Lyapunov solution operator}
\[
\Gc:\Fc_{st}\times\Wc_{>0},\quad
\Gc(f,\omega) := V_{f,\omega},
\]
mapping a vector field and dissipation function to the corresponding Lyapunov function. 
The central objective of this work is to establish continuity of this operator on compact subsets of the domain of attraction.

\subsection{Sobolev regularity of the data and solution}\label{sec:sobolev}

The universality results for FNOs are formulated for operators acting between Sobolev spaces, while Proposition \ref{prop:existence_uniqueness} defines solutions to the Lyapunov PDE in the $C^1$ and $C^2$ space.
To place the Lyapunov solution operator within this framework, it is necessary to ensure that both the input and output functions admit sufficient Sobolev regularity. 
The Sobolev regularity results here are standard in the analysis of PDEs and play a central role in neural operator theory \cite{evans2022partial}.
They ensure that the Lyapunov PDE \eqref{eq:lyap-pde} is well defined pointwise while simultaneously allowing us to work in function spaces compatible with neural operator approximation.
The next two theorems ensure that the solutions to the Lyapunov PDE can be embedded in appropriate Sobolev spaces. 

\begin{theorem}[Sobolev embedding \cite{Sob38}]\label{thm:sob}
    Let $\Omega \subset \Rb^n$ be a bounded open domain with Lipschitz boundary, and let $k \ge 0$ be an integer. If $s > n/2 + k$, then the Sobolev space $H^s(\Omega)$ is continuously and compactly embedded in $C^k(\overline{\Omega})$, i.e., 
    $H^s(\Omega) \hookrightarrow C^k(\overline{\Omega})$.
    Moreover, there exists a constant $C > 0$ such that
    \begin{equation*}
        \|u\|_{C^k(\overline{\Omega})} \leq C \|u\|_{H^s(\Omega)},\quad \forall u \in H^s(\Omega).
    \end{equation*}
\end{theorem}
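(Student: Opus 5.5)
The plan is the standard route: first prove the embedding on all of $\Rb^n$ via the Fourier transform, then transfer it to $\Omega$ with an extension operator, and finally obtain compactness from Rellich--Kondrachov together with a short interpolation argument. Throughout, $H^s(\Omega)$ is understood as the restriction space $\{U|_\Omega : U\in H^s(\Rb^n)\}$ with the quotient norm; for integer $s$ this coincides with the usual $W^{s,2}(\Omega)$ because $\Omega$ is Lipschitz.

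\textbf{Whole-space estimate.} Let $u\in\mathcal{S}(\Rb^n)$ and fix a multi-index $|\alpha|\le k$. By Fourier inversion, $\partial^\alpha u(x)=(2\pi)^{-n}\int e^{ix\cdot\xi}(i\xi)^\alpha\hat u(\xi)\,d\xi$. Apply Cauchy--Schwarz with weight $(1+|\xi|^2)^{s/2}$:
\[
|\partial^\alpha u(x)|\le C\Bigl(\int_{\Rb^n}\frac{|\xi|^{2|\alpha|}}{(1+|\xi|^2)^{s}}\,d\xi\Bigr)^{1/2}\|u\|_{H^s(\Rb^n)}.
\]
The integral is finite exactly when $2s-2|\alpha|>n$, and this follows from $s>n/2+k$. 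The same argument shows $\widehat{\partial^\alpha u}\in L^1$, so by Riemann--Lebesgue $\partial^\alpha u$ is continuous and bounded. Since Schwartz functions are dense in $H^s(\Rb^n)$ and the estimate is uniform, we get a bounded map $H^s(\Rb^n)\to C^k_b(\Rb^n)$, because uniform limits of $C^k$ functions with uniformly convergent derivatives remain $C^k$.

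\textbf{Transfer to $\Omega$.} Given $u\in H^s(\Omega)$, pick $U\in H^s(\Rb^n)$ with $U|_\Omega=u$ and $\|U\|_{H^s(\Rb^n)}\le 2\|u\|_{H^s(\Omega)}$. This is immediate from the restriction definition of the norm. Alternatively one can invoke Stein's universal extension operator for Lipschitz domains, which is bounded $H^s(\Omega)\to H^s(\Rb^n)$ for all $s\ge 0$; I expect this to be the main technical obstacle if one insists on the intrinsic Sobolev--Slobodeckij definition, and I would simply cite Stein's theorem. Restricting $U$ to $\overline\Omega$ then gives $u\in C^k(\overline\Omega)$ together with the claimed inequality $\|u\|_{C^k(\overline\Omega)}\le C\|u\|_{H^s(\Omega)}$.

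\textbf{Compactness.} Choose $s'$ with $n/2+k<s'<s$. The embedding $H^s(\Omega)\hookrightarrow H^{s'}(\Omega)$ is compact because $\Omega$ is bounded: for integer orders this is Rellich--Kondrachov, and in general it follows by extension and multiplication by a cutoff $\chi\in C_c^\infty$ equal to $1$ near $\overline\Omega$. On the Fourier side, after that cutoff, the high frequencies $|\xi|>R$ contribute at most $R^{-2(s-s')}$ in $H^{s'}$ norm, while the low frequencies form a compact family by Arzel\`a--Ascoli. Composing this compact embedding with the bounded embedding $H^{s'}(\Omega)\hookrightarrow C^k(\overline\Omega)$ from the previous step shows that $H^s(\Omega)\hookrightarrow C^k(\overline\Omega)$ is compact, since a compact operator followed by a bounded one is compact.
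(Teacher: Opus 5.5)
The paper does not prove this statement. It quotes it as a classical result with a citation and uses it only as a black box to check the regularity hypotheses of the FNO universality theorem, so there is no in-paper argument to compare yours against.

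Your proof is correct and follows the standard $L^2$/Fourier route:
\begin{itemize}
\item The weighted Cauchy--Schwarz bound is finite because $2(s-|\alpha|)>n$ for $|\alpha|\le k$. Together with density of $\mathcal{S}(\Rb^n)$, it gives $H^s(\Rb^n)\hookrightarrow C^k_b(\Rb^n)$.
\item Extension followed by restriction transfers this to $\overline{\Omega}$.
\item Compactness follows by factoring through $H^{s'}(\Omega)$ with $n/2+k<s'<s$. You can use either a cutoff plus Fourier truncation or, for integer $s$, Rellich plus interpolation. In the latter case, a single bounded extension operator such as Stein's is what lets the interpolation inequality hold on $\Omega$ with the quotient norms.
\end{itemize}

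Compared with the bare citation, your argument makes explicit something the statement leaves implicit. For non-integer $s$ one has to say which $H^s(\Omega)$ is meant. With your restriction-space definition, neither the embedding nor its compactness needs any regularity of $\partial\Omega$; compactness only uses boundedness of $\Omega$, for the cutoff. The Lipschitz hypothesis serves only to identify that space with the intrinsic $W^{s,2}(\Omega)$ and to supply the extension operator.

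On the other side, the classical proofs behind the citation are real-variable arguments that cover $W^{k,p}$ for general $p$, not only $p=2$. Also, the compactness half is usually credited to Rellich and Kondrachov rather than to Sobolev.
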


\begin{corollary}[Sobolev embedding for vector-valued functions]\label{cor:sob}
    Let $\Omega \subset \Rb^n$ be a bounded open domain with Lipschitz boundary, and let $k \ge 0$ be an integer. If $s > n/2 + k$, then the Sobolev space $H^s(\Omega;\Rb^n)$ consisting of $\Rb^n$-valued functions with $s$-Sobolev regularity is continuously and compactly embedded in $C^k(\overline{\Omega};\Rb^n)$, i.e.,
    $H^s(\Omega;\Rb^n) \hookrightarrow C^k(\overline{\Omega};\Rb^n)$.
    Moreover, there exists a constant $C > 0$ such that
    \begin{equation*}
        \|u\|_{C^k(\overline{\Omega};\Rb^n)} \leq C \|u\|_{H^s(\Omega;\Rb^n)},\quad \forall u \in H^s(\Omega;\Rb^n).
    \end{equation*}
\end{corollary}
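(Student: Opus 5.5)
The plan is to reduce Corollary~\ref{cor:sob} to the scalar case, Theorem~\ref{thm:sob}, one component at a time. The two Sobolev-type spaces involved are both products of their scalar counterparts. We use the standard convention that for $u=(u_1,\dots,u_n)\in H^s(\Omega;\Rb^n)$ each component satisfies $u_i\in H^s(\Omega)$, with
\[
\|u\|_{H^s(\Omega;\Rb^n)}^2=\sum_{i=1}^n\|u_i\|_{H^s(\Omega)}^2 .
\]
For the target space we use a norm of the form
\[
\|u\|_{C^k(\overline{\Omega};\Rb^n)}=\max_{|\alpha|\le k}\sup_{x\in\overline{\Omega}}|\partial^\alpha u(x)| ,
\]
where $|\cdot|$ is the Euclidean norm on $\Rb^n$. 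Any other choice of norm on $\Rb^n$, or taking a sum over components instead of a maximum, changes the norm only by a constant depending on $n$ and $k$. Such a change is harmless, because it only affects the final constant $C$.

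\textbf{Step 1 (continuous embedding and the estimate).} Fix $u\in H^s(\Omega;\Rb^n)$ with $s>n/2+k$. Apply Theorem~\ref{thm:sob} to each component $u_i\in H^s(\Omega)$. This shows that $u_i$ has a representative in $C^k(\overline{\Omega})$ and that
\[
\|u_i\|_{C^k(\overline{\Omega})}\le C_0\|u_i\|_{H^s(\Omega)} ,
\]
with a single constant $C_0$ that does not depend on $i$. Assembling the components gives a representative of $u$ in $C^k(\overline{\Omega};\Rb^n)$. For each multi-index $\alpha$ and each point $x$,
\[
|\partial^\alpha u(x)|\le\Big(\sum_i|\partial^\alpha u_i(x)|^2\Big)^{1/2}\le\Big(\sum_i\|u_i\|_{C^k}^2\Big)^{1/2}\le C_0\|u\|_{H^s(\Omega;\Rb^n)} .
\]
Taking the maximum over $|\alpha|\le k$ and the supremum over $x$ yields the claimed inequality with $C=C_0$, or with $C_0$ times a norm-equivalence factor under other conventions. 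The map $u\mapsto u$ is linear, so this bound gives both that the embedding is well defined and that it is continuous.

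\textbf{Step 2 (compactness).} Let $(u^{(m)})$ be a bounded sequence in $H^s(\Omega;\Rb^n)$. Each component sequence $(u^{(m)}_i)$ is then bounded in $H^s(\Omega)$. By the compactness part of Theorem~\ref{thm:sob}, we extract a subsequence along which $u^{(m)}_1$ converges in $C^k(\overline{\Omega})$. From that subsequence we extract a further subsequence along which $u^{(m)}_2$ converges, and we repeat this for all $n$ components. Since there are only finitely many components, the resulting diagonal subsequence has every component converging in $C^k(\overline{\Omega})$. By the component bound from Step 1, the vector-valued sequence then converges in $C^k(\overline{\Omega};\Rb^n)$, which proves compactness.

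The only delicate point is bookkeeping rather than analysis. One has to fix the product norms consistently so that the constant $C$ is explicit, and one has to note that "compact embedding" for a product of finitely many compact operators follows from sequential compactness via finitely many successive subsequence extractions. No new analytic estimate is needed beyond Theorem~\ref{thm:sob}.
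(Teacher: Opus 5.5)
Your proposal is correct and follows the same route as the paper, which simply says the corollary follows from the scalar case (Theorem~\ref{thm:sob}) by a component-wise argument. Your handling of the constant through the $\ell^2$ product norm, and of compactness through finitely many successive subsequence extractions, just fills in details the paper leaves implicit.
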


The proof of Corollary \ref{cor:sob} follows directly from the scalar case by applying the component-wise argument.

\section{Continuity and Universality of the Lyapunov Solution Operator}

In this section, we establish the main theoretical result of the paper: continuity of the Lyapunov solution operator with respect to perturbations of the vector field and the dissipation function. This property is essential for approximating Lyapunov functions uniformly over families of nonlinear systems.

\subsection{FNO universality theorem}

The work of \cite{kovachki2021universal} provides the key conditions imposed on the input functions and output function of an operator so that the FNO universality theorem applies. In our setting, the operator is the Lyapunov operator, which parametrizes the Lyapunov PDE over the vector field $f$ and the dissipation function $\omega$. To apply the universality result, we must verify the following conditions:
\begin{enumerate}
    \item the input vector fields belong to a Sobolev space $H^s(\Omega;\Rb^n)$ and the dissipation functions belong to a Sobolev space $H^r(\Omega)$ with sufficiently high regularity;
    \item the corresponding Lyapunov functions belong to a Sobolev space $H^{s'}(\Omega)$;
    \item the Lyapunov solution operator is continuous on compact subsets of the product of the input spaces.
\end{enumerate}
Under these assumptions, the modified universality theorem of \cite{kovachki2021universal} for the Lyapunov PDE yields convergent approximation by FNOs. We restrict attention to compact subsets of the admissible classes $\Fc_{st}$ and $\Wc_{>0}$, viewed as subsets of appropriate Sobolev spaces via Sobolev embedding.

\begin{theorem}[Modification of Theorem 9 by \cite{kovachki2021universal}]
\label{thm:fno_lyap_universality}
Let $\Omega \subset \Rb^n$ be a bounded domain with Lipschitz boundary such that $\overline{\Omega}\subset (0,2\pi)^n$. Let $s,r,s' \ge 0$, and let
\[
\Kc_f \subset \Fc_{\mathrm{st}} \cap H^s(\Omega;\Rb^n), 
\quad 
\Kc_\omega \subset \Wc_{>0} \cap H^r(\Omega)
\]
be compact sets of admissible vector fields and dissipation functions, respectively. Assume that for each $(f,\omega) \in \Kc_f \times \Kc_\omega$, the Lyapunov PDE
\begin{equation}
\nabla V(x)\cdot f(x) = -\omega(x), \quad x\in \Omega,
\label{eq:PDE}
\end{equation}
admits a unique solution $V_{f,\omega} \in H^{s'}(\Omega)$, and that the associated Lyapunov solution operator
\[
\Gc : \Kc_f \times \Kc_\omega \to H^{s'}(\Omega), 
\quad \Gc(f,\omega)=V_{f,\omega},
\]
is continuous with respect to the product topology induced by the $H^s(\Omega;\Rb^n)$ and $H^r(\Omega)$ norms on the input and the $H^{s'}(\Omega)$ norm on the output.
Let $\Tb^n$ denote the $n$-dimensional torus.
Then, for every $\varepsilon>0$, there exist
\begin{enumerate}
    \item continuous linear extension operators
    \[
    E_f : H^s(\Omega;\Rb^n) \to H^s(\Tb^n;\Rb^n),\quad
    E_\omega : H^r(\Omega) \to H^r(\Tb^n),
    \]
    \item a Fourier Neural Operator
    \[
    \Nc_\varepsilon : H^s(\Tb^n;\Rb^n) \times H^r(\Tb^n) \to H^{s'}(\Tb^n),
    \]
\end{enumerate}
such that
\[
\sup_{(f,\omega)\in \Kc_f \times \Kc_\omega}
\left\|
\Gc(f,\omega)-\Nc_\varepsilon(E_f f, E_\omega \omega)\big|_\Omega
\right\|_{H^{s'}(\Omega)}
<\varepsilon.
\]
In particular, the Lyapunov solution operator can be approximated arbitrarily well on compact subsets of admissible vector fields and dissipation functions by a suitable FNO.
\end{theorem}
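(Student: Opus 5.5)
The plan is to reduce the statement to Theorem 9 of \cite{kovachki2021universal}, which says the following. If $\mathcal{G}:H^{s}(\Omega;\Rb^{d_a})\supset\Kc\to H^{s'}(\Omega;\Rb^{d_u})$ is continuous and $\Kc$ is compact, then there exist a continuous linear extension operator and an FNO whose restriction to $\Omega$ approximates $\mathcal{G}$ uniformly on $\Kc$. Our operator has two inputs, $f$ and $\omega$, possibly of different regularity. So the first step is to package them into a single input. Set $\sigma:=\min(s,r)$ and consider the map $\iota:(f,\omega)\mapsto a:=(f,\omega)\in H^{\sigma}(\Omega;\Rb^{n+1})$. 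This map is continuous and linear, since $H^s\hookrightarrow H^\sigma$ and $H^r\hookrightarrow H^\sigma$ continuously. Therefore $\Kc:=\iota(\Kc_f\times\Kc_\omega)$ is compact: the product of compact sets is compact by Tychonoff, and continuous images of compact sets are compact.

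The second step is to check that $\Gc$ induces a continuous operator $\tilde\Gc:\Kc\to H^{s'}(\Omega)$ with $\tilde\Gc\circ\iota=\Gc$. This is where I expect the only real subtlety. The map $\iota$ is injective, but the topology on $\Kc$ comes from the weaker $H^\sigma$ norm. However, $\iota$ restricted to the compact set $\Kc_f\times\Kc_\omega$ is a continuous bijection onto the Hausdorff space $\Kc$, hence a homeomorphism. So $\tilde\Gc=\Gc\circ\iota^{-1}$ is continuous by the hypothesis on $\Gc$. This avoids any quantitative estimate relating $H^s$ and $H^\sigma$ norms.

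Third, apply the universality theorem. Its hypotheses are that $\overline\Omega\subset(0,2\pi)^n$, that $\Omega$ is a bounded Lipschitz domain, and that $\tilde\Gc$ is continuous on a compact subset of $H^\sigma(\Omega;\Rb^{n+1})$; all three hold. The theorem yields a continuous linear extension $E:H^\sigma(\Omega;\Rb^{n+1})\to H^\sigma(\Tb^n;\Rb^{n+1})$ and an FNO $\Nc$ with $\sup_{a\in\Kc}\|\tilde\Gc(a)-\Nc(Ea)|_\Omega\|_{H^{s'}}<\varepsilon$. The extension is built from Stein's extension followed by a smooth cutoff supported in $(0,2\pi)^n$ and periodization.

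To obtain the separate operators $E_f,E_\omega$ in the statement, I would instead construct the extension componentwise. The Stein extension with cutoff acts separately on each component and is bounded simultaneously on all $H^t$. So $E_f$ (applied to each of the $n$ components of $f$) and $E_\omega$ are continuous linear maps on $H^s$ and $H^r$ respectively, and their concatenation agrees with $E$ on $\Kc$. Finally, define $\Nc_\varepsilon(g,\eta):=\Nc((g,\eta))$, viewing the pair as a channel-concatenated input. This is still an FNO, because its lifting layer acts pointwise on the $n+1$ channels. Since $H^s(\Tb^n)\times H^r(\Tb^n)\hookrightarrow H^\sigma(\Tb^n;\Rb^{n+1})$, the map $\Nc_\varepsilon$ is defined on the stated domain.

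Unwinding the definitions gives the claimed uniform bound over $\Kc_f\times\Kc_\omega$. The remaining risk is whether the cited theorem requires $s'$ or $\sigma$ to satisfy extra constraints, for example that the output space is a Sobolev space on the torus rather than on $\Omega$. I would handle this by applying the theorem to $\tilde\Gc$ composed with a continuous linear extension $H^{s'}(\Omega)\to H^{s'}(\Tb^n)$ and then restricting back to $\Omega$, which is norm-nonincreasing.
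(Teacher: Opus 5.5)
The paper gives no argument for this theorem beyond its appeal to Theorem 9 of \cite{kovachki2021universal}. Your reduction is the route the paper implicitly takes, and it is a sound way to make the ``modification'' precise: you concatenate $(f,\omega)$ into an $(n+1)$-channel input in $H^{\sigma}$ with $\sigma=\min(s,r)$, and you transfer continuity through the compact-to-Hausdorff homeomorphism $\iota$. Step three, however, does not go through as written, because you state the cited theorem too generously.

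Theorem 9 is stated for an operator $\mathcal{S}:H^{s}(\Omega;\Rb^{d_a})\to H^{s'}(\Omega;\Rb^{d_u})$ that is continuous on the \emph{whole} space, and its proof relies on this. It applies the periodic universality theorem to $E'\circ\mathcal{S}\circ R$ on all of $H^s(\Tb^n;\Rb^{d_a})$, where $R$ is restriction to $\Omega$ and $E'$ is an extension on the output side. To my recollection, the periodic proof then replaces $\mathcal{S}(a)$ by $P_N\mathcal{S}(P_N a)$, with $P_N$ a Fourier truncation. It therefore evaluates the operator at points outside the compact set. Your $\tilde\Gc$ is defined only on $\Kc$, so the theorem cannot be applied to it directly.

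The repair is short. $\Kc$ is compact, hence closed in the metric space $H^\sigma(\Omega;\Rb^{n+1})$, and $H^{s'}(\Omega)$ is a Banach space. Dugundji's extension theorem therefore gives a continuous $\hat\Gc:H^\sigma(\Omega;\Rb^{n+1})\to H^{s'}(\Omega)$ with $\hat\Gc|_{\Kc}=\tilde\Gc$. Apply the theorem to $\hat\Gc$; its values off $\Kc$ never enter the final estimate. This extension step is exactly the part of the modification that the paper also leaves unaddressed, since its $\Gc$ is only defined on $\Kc_f\times\Kc_\omega$.

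A smaller looseness is your claim that the componentwise extension ``agrees with $E$ on $\Kc$.'' That depends on how the $E$ produced by Theorem 9 is built. Splitting it into $E_f$ on $H^s$ and $E_\omega$ on $H^r$ requires $E$ to be a single regularity-independent operator acting channelwise. It is cleaner to bypass Theorem 9 and invoke the periodic theorem with your own choices:
\begin{enumerate}
\item Take $E:=E_f\oplus E_\omega$, built from Stein's universal extension followed by a cutoff supported in $(0,2\pi)^n$, so that $E$ is also bounded on $H^\sigma$.
\item Take an extension $E':H^{s'}(\Omega)\to H^{s'}(\Tb^n)$.
\item Apply the periodic theorem to the continuous operator $E'\circ\hat\Gc\circ R$ on $H^\sigma(\Tb^n;\Rb^{n+1})$, with the compact set $E(\Kc)$.
\end{enumerate}
Since $R\circ E$ is the identity, and restriction to $\Omega$ is bounded from $H^{s'}(\Tb^n)$ to $H^{s'}(\Omega)$ (shrink $\varepsilon$ by its norm), the resulting FNO gives the stated bound when viewed on $H^s(\Tb^n;\Rb^n)\times H^r(\Tb^n)$. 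This is the route your last paragraph already anticipates.
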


The regularity conditions required for this theorem are satisfied in our setting. Specifically, from the Sobolev embedding results in Section \ref{sec:sobolev}, for $s > n/2 + 1$ we have $H^s(\Omega;\Rb^n) \hookrightarrow C^1(\overline{\Omega};\Rb^n)$, ensuring that vector fields are continuously differentiable. For the dissipation functions, we require $r > n/2 + 2$ to guarantee $H^r(\Omega) \hookrightarrow C^2(\overline{\Omega})$, which is the natural regularity for $\omega$ in the Lyapunov PDE. The continuity of the Lyapunov solution operator $\Gc$ is established in Theorem \ref{thm:continuity}, providing the final ingredient needed to apply the universality result.

\subsection{Regularity of the Lyapunov PDE}

Before turning to continuity, we briefly comment on regularity.  
Under Section \ref{sec:sobolev}, the Lyapunov PDE
admits a unique solution $V_{f,\omega} \in C^1(J)$ on compact invariant sets $J \subset {DOA}(f)$. In numerical settings, vector fields and Lyapunov functions are often represented in Sobolev spaces. For sufficiently large $s > n/2+1$, classical Sobolev embedding results ensure continuous and compact embeddings
\[
H^s(J;\Rb^n) \hookrightarrow C^1(J;\Rb^n),
\quad
H^s(J) \hookrightarrow C^1(J),
\]
which guarantee that the Lyapunov PDE is well defined in the function spaces used by neural operator architectures.

\subsection{Main continuity result}

We begin by stating the central theorem.

\begin{theorem}[Continuity of the Lyapunov solution operator]
\label{thm:continuity}
Let $f \in \Fc_{{st}}$ and $\omega \in \Wc_{>0}$.  
Let $K \subset {DOA}(f)$ be compact. Then there exists a compact set,
\[
K \subset J \subset {DOA}(f),
\]
such that the Lyapunov solution operator
\[
\Gc : \Fc_{{st}} \times \Wc_{>0} \to C^1(J), 
\quad \Gc(g,\psi) := V_{g,\psi},
\]
is continuous at $(f,\omega)$ when $\Fc_{{st}} \times \Wc_{>0}$ is endowed with the norm
\[
\|g\|_{C^1(J)} + \|\psi\|_{C^2(J)},
\]
and $C^1(J)$ is endowed with the uniform norm $\|\cdot\|_{\infty,J}$.
\end{theorem}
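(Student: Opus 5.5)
We will work with the integral representation \eqref{eq:lyap-integral}: we compare $V_{g,\psi}(x)-V_{f,\omega}(x)=\int_0^\infty[\psi(\phi_g(t,x))-\omega(\phi_f(t,x))]\,dt$ uniformly in $x$. The time integral is split into a finite horizon $[0,T]$, where trajectories depend continuously on the data, and a tail $[T,\infty)$, where the decay rate coming from the Hurwitz linearization controls everything. The compact set $J$ is needed so that perturbed trajectories stay in a region where $g$ is $C^1$-close to $f$.

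\textbf{Step 1 (construction of $J$ and a local quadratic Lyapunov function).} Since $A=\frac{\partial f}{\partial x}(0)$ is Hurwitz, we choose $P\succ0$ with $A^\top P+PA=-I$. We then take a small ball $B_\rho$ on which $\dot{x}^\top P x\le -c|x|^2$ holds for $f$. If $\|g-f\|_{C^1(B_\rho)}$ is small, then $g(0)=0$ and $Dg$ is close to $A$ on $B_\rho$. Hence the same inequality, with $c/2$ in place of $c$, holds for $g$. Consequently the sublevel set $S=\{x^\top Px\le\delta\}\subset B_\rho$ is forward invariant for every such $g$, with the uniform exponential bound $|\phi_g(t,x)|\le Me^{-\lambda t}|x|$. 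Next, by compactness of $K\subset DOA(f)$ and continuity of the flow, there is a $T_0$ with $\phi_f(T_0,x)\in\operatorname{int}\{x^\top Px\le\delta/2\}$ for all $x\in K$. We set
\[J:=\phi_f([0,T_0],K)\cup S \text{ enlarged by a small closed tube,}\]
for instance $J=\{y:\operatorname{dist}(y,\phi_f([0,T_0],K))\le\eta\}\cup S$. The radius $\eta$ is chosen small enough that $J\subset DOA(f)$. This is possible because $DOA(f)$ is open and $\phi_f([0,T_0],K)$ is compact. The set $J$ is then compact and satisfies $K\subset J\subset DOA(f)$.

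\textbf{Step 2 (finite-horizon closeness).} We use Gronwall on $[0,T_0]$. As long as $\phi_g(t,x)$ remains in $J$, we have $|\phi_g(t,x)-\phi_f(t,x)|\le T_0\|g-f\|_{\infty,J}e^{LT_0}$, where $L$ is the Lipschitz constant of $f$ on $J$. A bootstrap argument then shows that for $\|g-f\|_{C^1(J)}$ small, the perturbed trajectory stays within the tube of radius $\eta$. It also shows that $\phi_g(T_0,x)\in S$ for all $x\in K$. From that time on, Step 1 gives uniform exponential decay for both flows.

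\textbf{Step 3 (estimate).} For $x\in K$ we split the integral into $[0,T_0]$ and $[T_0,\infty)$. On $[0,T_0]$ we use
\[|\psi(\phi_g)-\omega(\phi_f)|\le\|\psi-\omega\|_{\infty,J}+\operatorname{Lip}_J(\omega)\,|\phi_g-\phi_f|,\]
and both terms are small. On the tail, the conditions $\omega(0)=0$ and $\nabla\omega(0)=0$ give $|\omega(y)|\le C|y|^2$ on $S$. The same bound, with $C+\|\psi-\omega\|_{C^2}$ in place of $C$, holds for $\psi$. Hence the tail is bounded by a constant times $\int_{T}^\infty e^{-2\lambda t}dt$, which is small for large $T$. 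Choosing $T\ge T_0$ first and then the perturbation small gives $\|V_{g,\psi}-V_{f,\omega}\|_{\infty,K}<\varepsilon$.

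\textbf{Main obstacle.} The hard part is obtaining the estimate on all of $J$ rather than only on $K$. Points of the tube near $\partial DOA(f)$ may take longer to enter $S$. We handle this by repeating the choice of $T_0$ for the compact set $J$ itself. This works because $J\subset DOA(f)$, so the constant $T_0'$ is uniform over $J$. We then use a slightly larger auxiliary compact set $J'$ with $J\subset J'\subset DOA(f)$ as the region on which the bootstrap is performed. The norm in the statement is $C^1(J)$, so we need the perturbed trajectories from $J$ to stay inside $J$ itself. For this we choose $J$ to be a sublevel set $\{V_{f,\omega}\le c\}$, possibly unioned with $S$, with $c$ large enough that it contains $K$. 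Such a set is forward invariant under $f$. Because $\nabla V_{f,\omega}\cdot g\le-\omega/2<0$ away from the origin for $g$ $C^0$-close to $f$, it remains forward invariant under $g$, which closes the argument.
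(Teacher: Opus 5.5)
Your final argument works at the paper's level of rigor. Once the tube and the auxiliary $J'$ are dropped, it has the paper's skeleton:
\begin{itemize}
\item the same $J=\{V_{f,\omega}\le c\}$ with $c\ge\max_K V_{f,\omega}$;
\item forward invariance of $J$ under $g$ from the sign of $\nabla V_{f,\omega}\cdot g=-\omega+\nabla V_{f,\omega}\cdot(g-f)$, which the paper gets implicitly from its uniform-decay lemma;
\item a finite-horizon Gr\"onwall/Lipschitz estimate;
\item an exponential tail from a quadratic Lyapunov function that survives $C^1$-small perturbations (the paper's robust-stability lemma).
\end{itemize}
The genuine difference is how you get a time, uniform in $x\in J$ and in $g$, after which perturbed trajectories are in the local regime. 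The paper bounds the hitting time of $U(\delta)=\{V_{f,\omega}<\delta\}$ for the perturbed flow directly, $T(\delta,x,g)\le (R-\delta)/c_\delta$. This uses the uniform decrease rate $c_\delta$ of $V_{f,\omega}$ along $g$ on $J\setminus U(\delta)$. You instead get a uniform entry time $T_0'$ into $\{x^\top Px<\delta/2\}$ for the nominal flow, by compactness of $J$. You then transfer it to $g$ by Gr\"onwall, landing in the ellipsoid $S$, which is forward invariant for every nearby $g$. Your route uses $V_{f,\omega}$ only on $\partial J$ and makes the hand-off to the exponential estimate clean. By contrast, the paper infers $\|\phi_g(T,x)\|\le\delta$ from $\phi_g(T,x)\in U(\delta)$ without justification. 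The paper's route, in exchange, gives an explicit entry-time bound. Making the tail small via large $T$ rather than small $\delta$ is an inessential difference. In the write-up, drop the tube and $J'$ entirely: the tube is not forward invariant even under $f$, and $g$ is not controlled on $J'\supsetneq J$. On the invariant $J$, no bootstrap is needed.

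One caveat applies to your proof and the paper's alike: compactness of $\{V_{f,\omega}\le c\}$ is not automatic, since $V_{f,\omega}$ need not blow up at infinity. Take $f(x)=(-x_1-x_1^5,\,-x_2)$ and $\omega(x)=|x|^2$. Then $V_{f,\omega}(x)=\tfrac12\arctan(x_1^2)+\tfrac12x_2^2$. With $K=\{(0,2)\}$, every admissible $c\ge 2$ gives a set containing the unbounded strip $\{|x_2|\le(4-\pi/2)^{1/2}\}$.

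Your argument repairs easily. Take $J=\{W\le c\}$ for a proper $C^1$ Lyapunov function $W$ on ${DOA}(f)$, which is available from classical converse theorems, with $c$ large enough that $J\supset K\cup S$. This $J$ is compact. It is forward invariant under every $g$ with $\|g-f\|_{\infty,J}$ small, because $\nabla W\cdot f$ is bounded away from $0$ on $\{W=c\}$. Your entry-time argument (compactness plus Gr\"onwall) never uses that $J$ is a sublevel set of $V_{f,\omega}$ itself, so the rest goes through unchanged.
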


\begin{remark}
Theorem~\ref{thm:continuity} formalizes the intuition that small perturbations of the system dynamics and dissipation function induce small changes in the associated Lyapunov function on compact invariant sets. This robustness property provides the theoretical foundation for learning Lyapunov functions uniformly over families of nonlinear systems.
\end{remark}

The proof of Theorem~\ref{thm:continuity} proceeds in three steps. First, we establish robustness of exponential stability under perturbations of the vector field. Second, we show uniform decay of the Lyapunov function outside small sublevel sets. Finally, we prove uniform integrability of the Lyapunov integral representation.

\subsection{Robust exponential stability}

We now establish robustness of local exponential stability with respect to perturbations of the vector field.

\begin{lemma}[Robust exponential stability]
\label{lem:robust-stability}
Let $f \in \Fc_{{st}},\omega\in\Wc_{>0}$. There exist constants $r>0$, $\delta>0$, $M>0$, and $c>0$ such that for any
\[
g \in \Fc_0 \quad \text{with} \quad \|g-f\|_{C^1(J)} < r,
\]
the origin remains locally exponentially stable for $\dot{x}=g(x)$ and
\[
\|\phi_g(t,x)\| \le M e^{-ct}\|x\|, \quad \forall\, t\ge 0,\ \|x\|\le \delta.
\]
\end{lemma}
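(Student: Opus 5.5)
The plan is a quadratic Lyapunov function for the linearization, used as a common Lyapunov function for every $g$ that is $C^1$-close to $f$ on a neighborhood of the origin. We read $J$ as containing a closed ball $\overline{B}_\rho$ around the origin. This is natural, since $J\supset K$ can be enlarged by a small ball without leaving ${DOA}(f)$.

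\textbf{Step 1: quadratic Lyapunov function for $f$.} Let $A=\frac{\partial f}{\partial x}(0)$, which is Hurwitz. Take $P\succ 0$ solving $A^\top P+PA=-I$, and set $W(x)=x^\top Px$, so that $\lambda_{\min}(P)\|x\|^2\le W(x)\le\lambda_{\max}(P)\|x\|^2$.

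\textbf{Step 2: uniform linearization error for perturbed $g$.} Since $g(0)=f(0)=0$, we can write $g(x)=A_g x+R_g(x)$ with $A_g=\frac{\partial g}{\partial x}(0)$. Then $\|A_g-A\|\le\|g-f\|_{C^1(J)}<r$. Also
\[
R_g(x)=\int_0^1\Bigl(\tfrac{\partial g}{\partial x}(\tau x)-\tfrac{\partial g}{\partial x}(0)\Bigr)x\,d\tau ,
\]
and therefore
\[
\|R_g(x)\|\le\Bigl(\sup_{\|y\|\le\|x\|}\bigl\|\tfrac{\partial f}{\partial x}(y)-A\bigr\|+2r\Bigr)\|x\|.
\]
By continuity of $\frac{\partial f}{\partial x}$ at $0$, we choose $\rho_0\le\rho$ such that the supremum is at most $\eta$ on $\overline{B}_{\rho_0}$. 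Combining the two estimates gives, for $\|x\|\le\rho_0$,
\[
\nabla W(x)\cdot g(x)\le -\|x\|^2+2\|P\|\,(r+\eta+2r)\|x\|^2 .
\]
Choosing $\eta$ and $r$ small enough that $2\|P\|(3r+\eta)\le\tfrac12$ yields
\[
\dot W\le-\tfrac12\|x\|^2\le-\frac{1}{2\lambda_{\max}(P)}W .
\]
In particular, the Jacobian $\frac{\partial g}{\partial x}(0)$ is also Hurwitz, because the same $P$ works for $A_g$. So $g\in\Fc_{st}$, and local exponential stability follows.

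\textbf{Step 3: invariance and the exponential estimate.} The main obstacle is keeping trajectories inside $\overline{B}_{\rho_0}$, where the estimate of Step 2 is valid. Only $g|_J$ is controlled, so we must rule out escape from the ball. Take the sublevel set $\{W\le c_0\}$ with $c_0=\lambda_{\min}(P)\rho_0^2$; it is contained in $\overline{B}_{\rho_0}$. On this set $\dot W<0$ away from the origin, so a standard first-exit-time contradiction shows the set is forward invariant for $\dot x=g(x)$. Solutions then exist for all $t\ge0$ because they stay in a compact set.

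Set $\delta=\rho_0\sqrt{\lambda_{\min}(P)/\lambda_{\max}(P)}$, so that $\|x\|\le\delta$ implies $W(x)\le c_0$. Grönwall's inequality gives $W(\phi_g(t,x))\le e^{-t/\lambda_{\max}(P)}W(x)$. Converting back to norms yields the claimed bound with
\[
M=\sqrt{\lambda_{\max}(P)/\lambda_{\min}(P)},\qquad c=\frac{1}{2\lambda_{\max}(P)}.
\]
These constants depend only on $f$, which proves the lemma. The function $\omega$ plays no role here.
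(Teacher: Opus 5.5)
Your argument is correct and is essentially the paper's: both build a quadratic $W$ from the Lyapunov equation for $A=\frac{\partial f}{\partial x}(0)$ (the paper puts $\nabla^2\omega(0)$ on the right-hand side instead of $I$, which is immaterial), absorb the $C^1(J)$-perturbation via an integral Taylor decomposition, and conclude with invariance of a sublevel set of $W$ plus Gr\"onwall. Your choice $\delta=\rho_0\sqrt{\lambda_{\min}(P)/\lambda_{\max}(P)}$ is in fact more careful than the paper's $\delta=\delta_1$, which does not by itself place the ball $\|x\|\le\delta$ inside the invariant sublevel set; one small slip is that from $\dot W\le -W/(2\lambda_{\max}(P))$ Gr\"onwall gives $W(\phi_g(t,x))\le e^{-t/(2\lambda_{\max}(P))}W(x)$, so the rate is $c=1/(4\lambda_{\max}(P))$ rather than $1/(2\lambda_{\max}(P))$, which is harmless for the lemma.
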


\begin{proof}
Define the ball $B(x,\eta):=\{y\in\Rb^n:\|x-y\|<\eta\}$, and define $\overline{B}(x,\eta)$ as its closure.
Choose $\eta>0$ such that $B(0,\eta)\subset DOA(f)$. Let $J=K\cup\overline{B}(0,\eta)$.
Let $A:=\fracpartial{f}{x}(0),Q=\nabla^2\omega(0)$. Since $f\in\Fc_{st}$, $A$ is Hurwitz, and since $\omega\in\Wc_{>0}$, $Q\succ 0$.
Let $P\in\Rb^{n\times n}$ be the positive definite matrix that satisfies \cite{khalil2002nonlinear}
\[PA+A^TP=-Q.\]
Let $\lambda_{\min}(\cdot)$ and $\lambda_{\max}(\cdot)$ refer to the smallest and largest eigenvalues respectively.
Define $W(x):=\frac{1}{2}x^TPx$, then $W$ is a local Lyapunov function for $f$. Let
\[c_1:=\frac{\lambda_{\min}(Q)}{6\lambda_{\max}(P)}.\]
We will show that, for sufficiently small $\delta>0$ and $r>0$, the function $W$ is a strict local Lyapunov function for the system $\dot x=g(x)$ whenever $\|g-f\|_{C^1(J)}<r$.

Since $f\in C^1$, there exists $\delta_1>0$ such that $\delta_1\le \eta$ and
\[
\|\xi\|\le \delta_1\quad\implies\quad
\left\|\frac{\partial f}{\partial x}(\xi)-A\right\|\le c_1.
\]
Choose $r:=c_1$.
Now let $g\in \Fc_0$ satisfy $\|g-f\|_{C^1(J)}<r$, and let $\|x\|\le \delta_1$.
Since $g(0)=0$, Taylor's theorem gives
\[
g(x)=\int_0^1 \frac{\partial g}{\partial x} (tx) dt\, x 
\]
Hence,
\begin{align*}
\frac{d}{dt}W(\phi_g(t,x))\Big|_{t=0}
=~& x^TPg(x) \\
=~& x^TP\int_0^1 \frac{\partial g}{\partial x} (tx) dt\,x \\
=~& x^TPAx
   +x^TP\int_0^1\left(\frac{\partial g}{\partial x}(tx)-\frac{\partial f}{\partial x}(tx)\right)dt\,x\\
   &+x^TP\int_0^1\left(\frac{\partial f}{\partial x}(tx)-A\right)dt\,x.
\end{align*}
Since $x^TPAx=\frac12 x^T(PA+A^TP)x=-\frac12 x^TQx$, we obtain
\begin{align*}
&\frac{d}{dt}W(\phi_g(t,x))\Big|_{t=0}\\
\le~& -\frac12 \lambda_{\min}(Q)\|x\|^2
+\|P\|\left\|\frac{\partial g}{\partial x}-\frac{\partial f}{\partial x}\right\|_\infty\|x\|^2 \\
&+\|P\|\left\|\frac{\partial f}{\partial x}-A\right\|_\infty\|x\|^2 \\
\le~& -\frac12 \lambda_{\min}(Q)\|x\|^2
   +\|P\|r\|x\|^2
+\|P\|\frac{\lambda_{\min}(Q)}{6\|P\|}\|x\|^2 \\
\le~& -\frac12 \lambda_{\min}(Q)\|x\|^2
   +\frac16\lambda_{\min}(Q)\|x\|^2
+\frac16\lambda_{\min}(Q)\|x\|^2 \\
=~& -\frac16 \lambda_{\min}(Q)\|x\|^2.
\end{align*}
Using $2W(x)\le \lambda_{\max}(P)\|x\|^2$, it follows that
\[
\frac{d}{dt}W(\phi_g(t,x))\Big|_{t=0}
\le -\frac{\lambda_{\min}(Q)}{3\lambda_{\max}(P)}\,W(x)
= -2c\,W(x)
\]
for all $\|x\|\le \delta_1$.

Therefore, along any trajectory of $\dot x=g(x)$ that remains in $B(0,\delta_1)$,
\[
\frac{d}{dt}W(\phi_g(t,x)) \le -2c\,W(\phi_g(t,x)).
\]
By Gr\"onwall's inequality,
\[
W(\phi_g(t,x))\le e^{-2ct}W(x), \quad t\ge 0.
\]
In particular, since $W$ is decreasing, the sublevel set
\[
\Omega_\delta:=\{x\in\Rb^n: W(x)\le \tfrac12 \lambda_{\min}(P)\delta^2\}
\]
is positively invariant whenever $\delta\le \delta_1$. Choose $\delta=\delta_1.$
Then every trajectory with $\|x\|\le \delta$ remains in $B(0,\delta)\subset B(0,\eta)\subset J$ for all $t\ge 0$, so the above estimate is valid globally in time.

Finally, using the bounds relating $W$ and $\|x\|^2$,
\begin{align*}
\|\phi_g(t,x)\|^2
&\le \frac{2}{\lambda_{\min}(P)}W(\phi_g(t,x)) \\
&\le \frac{2}{\lambda_{\min}(P)}e^{-2ct}W(x) \\
&\le \frac{\lambda_{\max}(P)}{\lambda_{\min}(P)}e^{-2ct}\|x\|^2.
\end{align*}
Hence,
\[
\|\phi_g(t,x)\|
\le \sqrt{\frac{\lambda_{\max}(P)}{\lambda_{\min}(P)}}\,e^{-ct}\|x\|.
\]
Setting
\[
M:=\sqrt{\frac{\lambda_{\max}(P)}{\lambda_{\min}(P)}},
\]
we obtain
\[
\|\phi_g(t,x)\| \le M e^{-ct}\|x\|, \quad \forall\, t\ge 0,\ \|x\|\le \delta.
\]
This proves local exponential stability of the origin for $\dot x=g(x)$.
\end{proof}

\subsection{Uniform decay of the Lyapunov derivative}

We next show that the Lyapunov function $V_{f,\omega}$ decreases uniformly along trajectories outside small sublevel sets.

\begin{lemma}[Uniform Lyapunov decay]
\label{lem:uniform-decay}
Let $f \in \Fc_{st}$, $\omega \in \Wc_{>0}$, and let $J \subset DOA(f)$ be compact. Define
$U(\delta) := \{x \in \Rb^n : V_{f,\omega}(x) < \delta\}.$
Then there exists $\delta_0>0$ such that for every $\delta \in (0,\delta_0)$, there exist constants $c_\delta>0$ and $r_\delta>0$ such that for all
\[
g \in \Fc_0 \quad \text{with} \quad \|g-f\|_{C^1(J)} < r_\delta,
\]
the inequality
\[
\nabla V_{f,\omega}(x)\cdot g(x) \le -c_\delta
\]
holds for all $x \in J \setminus U(\delta)$.
\end{lemma}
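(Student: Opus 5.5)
The plan is to split $\nabla V_{f,\omega}\cdot g$ into the unperturbed term and a perturbation term,
\[
\nabla V_{f,\omega}(x)\cdot g(x) = -\omega(x) + \nabla V_{f,\omega}(x)\cdot\bigl(g(x)-f(x)\bigr),
\]
using the Lyapunov PDE \eqref{eq:lyap-pde} supplied by Proposition~\ref{prop:existence_uniqueness}. We then bound the first term away from zero on $J\setminus U(\delta)$ and make the second small by choosing $r_\delta$.

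\textbf{Lower bound on $\omega$ away from the origin.} First I would check that $\omega>0$ on $\Rb^n\setminus\{0\}$. The conditions \eqref{eq:omega} with $\nabla^2\omega\succ0$ everywhere make $\omega$ strictly convex with a critical point at $0$. Hence $0$ is its unique global minimizer with value $0$.

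Next I would note that $V_{f,\omega}(0)=0$, that $V_{f,\omega}$ is continuous, and that $V_{f,\omega}(x)>0$ for $x\neq0$ by the integral representation \eqref{eq:lyap-integral}. The set $J\setminus U(\delta)=\{x\in J: V_{f,\omega}(x)\ge\delta\}$ is then a closed subset of the compact set $J$, hence compact. It does not contain the origin, since $V_{f,\omega}(0)=0<\delta$.

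Therefore $m_\delta:=\min_{J\setminus U(\delta)}\omega>0$, provided the set is nonempty; if it is empty the claim is vacuous. Here $\delta_0$ can be any positive number, for instance $\delta_0:=\max_J V_{f,\omega}$, or simply $\infty$. A finite $\delta_0$ is needed only to match the statement's form.

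\textbf{Perturbation term.} Since $V_{f,\omega}\in C^1(J)$ (Proposition~\ref{prop:existence_uniqueness}, via \cite{liu2025physics}), the constant $L:=\max_J\|\nabla V_{f,\omega}\|$ is finite. Then
\[
|\nabla V_{f,\omega}(x)\cdot(g(x)-f(x))|\le L\,\|g-f\|_{C^1(J)} < L r_\delta .
\]
I would choose $r_\delta:=m_\delta/(2L+1)$ and $c_\delta:=m_\delta/2$. This gives $\nabla V_{f,\omega}\cdot g\le -m_\delta+m_\delta/2=-c_\delta$ on $J\setminus U(\delta)$.

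\textbf{Main obstacle.} The delicate point is justifying that $V_{f,\omega}$ is $C^1$ on all of $J$, with the PDE holding pointwise there. Proposition~\ref{prop:existence_uniqueness} is stated for compact positively invariant sets, but $J$ here is only compact.

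I would handle this by enlarging $J$ to a compact positively invariant set $\tilde J\subset DOA(f)$. One option is $\tilde J := J\cup\{\phi_f(t,x): t\ge0,\,x\in J\}$. This set is bounded, because trajectories from the compact set $J$ enter a fixed neighbourhood of $0$ in uniformly bounded time, a standard consequence of continuity of the flow and local exponential stability. Its closure adds only the origin.

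Alternatively, I would invoke the $C^1$ regularity of \eqref{eq:lyap-integral} on the open set $DOA(f)$ directly from \cite{liu2025physics}, which gives continuity of $\nabla V_{f,\omega}$ on $J$. Either route yields a finite $L$, and everything else in the argument is elementary compactness.
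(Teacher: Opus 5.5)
Your proof is correct and follows the paper's outline. It uses the same splitting $\nabla V_{f,\omega}\cdot g=-\omega+\nabla V_{f,\omega}\cdot(g-f)$ and essentially the same $r_\delta$: half the lower bound on $\omega$, divided by $\sup\|\nabla V_{f,\omega}\|$.

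The difference is in how $\omega$ is bounded below off $U(\delta)$. The paper passes to the positively invariant sublevel set $K_2=\{V_{f,\omega}\le R\}$ and splits $K_2\setminus U(\delta)$ into two parts. The part outside $B(0,\delta_1)$ is handled by compactness. The part inside is handled quantitatively, combining $\omega(x)\ge\tfrac{\lambda_{\min}(Q)}{2}\|x\|^2$ with the mean-value bound $\|x\|\ge\delta/\|\nabla V_{f,\omega}\|_{\infty,K_2}$. This is why the paper needs $\delta_0$ small enough that $U(\delta)\subset B(0,\delta_1)$. You observe instead that $J\setminus U(\delta)$ is already a compact set avoiding the origin, so a single minimum suffices and $\delta_0$ can be arbitrary.

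Your route is shorter. It also estimates $g-f$ only on $J$, which is exactly what the hypothesis $\|g-f\|_{C^1(J)}<r_\delta$ controls. The paper's final estimate is written on $K_2\setminus U(\delta)$ using $\|g-f\|_{\infty,K_2}$, which that hypothesis does not bound unless one restricts back to $J$.

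The paper's route buys an explicit constant, with $c_\delta$ of order $\delta^2$ for small $\delta$. It also yields the inclusion $U(\delta)\subset B(0,\delta_1)$, which is invoked again in the proof of Lemma~\ref{lem:integrability}. With your version, that smallness condition on $\delta$ would have to be imposed separately there.

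Your handling of $C^1$ regularity, via the closed forward orbit of $J$ or regularity on $DOA(f)$ from \cite{liu2025physics}, plays the same role as the paper's enlargement to $K_2$. Both arguments rely on $\omega>0$ off the origin. You justify this explicitly by reading \eqref{eq:omega} as $\nabla^2\omega\succ0$ everywhere, whereas the paper uses it without comment.
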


\begin{proof}
Let $Q := \nabla^2\omega(0) \succ 0$, and denote $\lambda_{\min}(Q)>0$ its smallest eigenvalue.

Let
\[
R := \max_{x \in J} V_{f,\omega}(x), 
\quad
K_2 := \{x \in \Rb^n : V_{f,\omega}(x) \le R\}.
\]
Then $K_2$ is compact, positively invariant under $f$, and satisfies
\[
J \subseteq K_2 \subseteq DOA(f).
\]

We first establish a lower bound on $\omega$ near the origin.
Since $\omega \in C^2$ and $\nabla^2\omega(0)=Q\succ 0$, there exists $\delta_1>0$ such that for all $\|x\|\le \delta_1$,
\[
\|\nabla^2\omega(x)-Q\| \le \frac{\lambda_{\min}(Q)}{2}.
\]
By Taylor's theorem, for such $x$,
\[
\omega(x) = \frac12 x^T \nabla^2\omega(\xi) x
\]
for some $\xi$ on the segment between $0$ and $x$. Hence,
\begin{align}
\omega(x)
&= x^T Q x + x^T \bigl(\nabla^2\omega(\xi)-Q\bigr)x \nonumber \\
&\ge \lambda_{\min}(Q)\|x\|^2 - \|\nabla^2\omega(\xi)-Q\|\|x\|^2 \nonumber \\
&\ge \frac{\lambda_{\min}(Q)}{2}\|x\|^2.
\label{eq:omega-lower}
\end{align}


Since $V_{f,\omega}$ is continuous and $V_{f,\omega}(0)=0$, there exists $\delta_0>0$ such that for all $\delta\in(0,\delta_0)$,
\[
U(\delta) \subset B(0,\delta_1).
\]
Fix such a $\delta$, then for $x \in B(0,\delta_1)\setminus U(\delta)$, we have $V_{f,\omega}(x)\ge \delta$.  
By Taylor's theorem, followed by Cauchy-Schwarz inequality,
\[
\delta \le \|\nabla V_{f,\omega}\|_{\infty,K_2}\|x\|
\quad\implies\quad
\|x\| \ge \frac{\delta}{\|\nabla V_{f,\omega}\|_{\infty,K_2}}.
\]
Combining with \eqref{eq:omega-lower}, we obtain
\begin{equation}
\omega(x)
\ge
\frac{\lambda_{\min}(Q)}{2}
\frac{\delta^2}{\|\nabla V_{f,\omega}\|_{\infty,K_2}^2},
\quad x \in B(0,\delta_1)\setminus U(\delta).
\label{eq:omega-inner}
\end{equation}

Next, we establish a global lower bound on $\omega$ outside $U(\delta)$.
To separate our analysis into large and small level sets,
we decompose the set
\[
K_2 \setminus U(\delta)
=
\bigl(K_2 \setminus B(0,\delta_1)\bigr)
\cup
\bigl(B(0,\delta_1)\setminus U(\delta)\bigr).
\]
Since $\omega$ is continuous and strictly positive, the minimum away from the origin,
\[
m_1 := \min_{x \in K_2 \setminus B(0,\delta_1)} \omega(x),
\]
is strictly positive. Combining this with \eqref{eq:omega-inner}, we obtain
\[
\min_{x \in K_2 \setminus U(\delta)} \omega(x)
\ge
\min\left\{
m_1,\;
\frac{\lambda_{\min}(Q)}{2}
\frac{\delta^2}{\|\nabla V_{f,\omega}\|_{\infty,K_2}^2}
\right\}.
\]
Define
\[
c_\delta := \frac12
\min\left\{
m_1,\;
\frac{\lambda_{\min}(Q)}{2}
\frac{\delta^2}{\|\nabla V_{f,\omega}\|_{\infty,K_2}^2}
\right\} > 0.
\]
Then,
\begin{equation}
\min_{x \in K_2 \setminus U(\delta)} \omega(x) \ge 2c_\delta.
\label{eq:omega-global}
\end{equation}

Finally, we prove the main perturbation argument.
Let
\[
r_\delta := \frac{c_\delta}{\|\nabla V_{f,\omega}\|_{\infty,K_2}}.
\]
Let $g \in \Fc_0$ satisfy $\|g-f\|_{C^1(J)} < r_\delta$.  
Then, for all $x \in K_2 \setminus U(\delta)$,
\begin{align*}
\nabla V_{f,\omega}(x)\cdot g(x)
&=
-\omega(x) + \nabla V_{f,\omega}(x)\cdot (g(x)-f(x)) \\
&\le
-2c_\delta
+ \|\nabla V_{f,\omega}\|_{\infty,K_2}\|g-f\|_{\infty,K_2} \\
&<
-2c_\delta + \|\nabla V_{f,\omega}\|_{\infty,K_2} r_\delta \\
&= -c_\delta.
\end{align*}

This proves the result.
\end{proof}

\subsection{Uniform integrability of the Lyapunov integral}

We now establish convergence and robustness of the integral representation.

\begin{lemma}[Uniform integrability]
\label{lem:integrability}
Let $f \in \Fc_{st}$ and $\omega \in \Wc_{>0}$.  
There exist constants $\delta>0$, $r>0$, and $C>0$ such that for all
\[
g \in \Fc_0,\ \psi \in \Wc_{>0}
\quad \text{with} \quad
\|g-f\|_{C^1(J)} + \|\psi-\omega\|_{C^2(J)} < r,
\]
the integral
\[
V_{g,\psi}(x) = \int_0^\infty \psi(\phi_g(t,x))\,dt
\]
is well defined for all $x \in J$, and
\[
\int_{T}^\infty |\psi(\phi_g(t,x))|\,dt \le C\delta
\]
for all $x \in J$ and all sufficiently large $T$.
\end{lemma}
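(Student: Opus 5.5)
My plan is to combine the two preceding lemmas. Lemma~\ref{lem:robust-stability} gives exponential decay near the origin, uniformly in $g$. Lemma~\ref{lem:uniform-decay} gives a uniform entrance time into a small sublevel set of $V_{f,\omega}$, again uniformly in $g$. The constant $\delta$ in the statement will be tied to the size of the neighborhood in which trajectories finish their approach to the origin. I take $J$ as in Lemma~\ref{lem:robust-stability}, enlarged if necessary to the compact sublevel set $K_2=\{V_{f,\omega}\le R\}$ appearing in the proof of Lemma~\ref{lem:uniform-decay}.

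\textbf{Step 1 (near-origin bound).} Let $\delta_s,M,c,r_s$ be the constants of Lemma~\ref{lem:robust-stability}. Since $\psi\in\Wc_{>0}$ with $\|\psi-\omega\|_{C^2(J)}<r$, Taylor's theorem gives $|\psi(y)|\le \tfrac12\bigl(\|\nabla^2\omega\|_{\infty,J}+r\bigr)\|y\|^2=:L\|y\|^2$ on $\overline B(0,\delta_s)$. This uses $\psi(0)=0$ and $\nabla\psi(0)=0$, and $L$ is uniform in $\psi$. Hence, for $\|y\|\le\delta_s$,
\[
\int_0^\infty|\psi(\phi_g(t,y))|\,dt\le LM^2\|y\|^2\int_0^\infty e^{-2ct}dt=\frac{LM^2}{2c}\|y\|^2 .
\]

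\textbf{Step 2 (uniform entrance time).} Pick $\delta'<\delta_0$ so small that $U(\delta')\subset B(0,\delta_s)$; its closure then lies within distance $\delta_s$ of the origin. Apply Lemma~\ref{lem:uniform-decay} with this $\delta'$ and with $K_2$ as the compact set. For $\|g-f\|_{C^1}<r_{\delta'}$ we get $\frac{d}{dt}V_{f,\omega}(\phi_g(t,x))\le -c_{\delta'}$ as long as the trajectory is in $K_2\setminus U(\delta')$.

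I expect the main obstacle here to be showing that trajectories of $g$ starting in $J$ cannot leave $K_2$. The sublevel set $K_2$ is invariant for $f$, not obviously for $g$. I would first handle this by starting from a slightly larger level $R'>R$ with $K_2'=\{V_{f,\omega}\le R'\}\subset DOA(f)$. On the boundary $V_{f,\omega}=R'$ the derivative along $g$ is negative by the same estimate, so $K_2'$ is positively invariant for $g$. Consequently every trajectory from $x\in J$ enters $U(\delta')$ by time
\[
T_0:=\frac{R'}{c_{\delta'}} .
\]
It stays within $B(0,\delta_s)$ afterwards by Lemma~\ref{lem:robust-stability}, at the cost of shrinking $\delta'$ so that $U(\delta')$ lies inside the invariant ellipsoid $\Omega_{\delta_s}$ used there.

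\textbf{Step 3 (conclusion).} Set $r=\min\{r_s,r_{\delta'}\}$, and let $\delta:=\delta_s^2$ after adjusting constants. For $x\in J$ the integral splits as $\int_0^{T_0}+\int_{T_0}^\infty$. The first piece is bounded by $T_0\,\|\psi\|_{\infty,K_2'}$, which is finite since $\psi$ is $C^0$-close to $\omega$. For $T\ge T_0$, Step~1 applied to $y=\phi_g(T_0,x)$ gives
\[
\int_T^\infty|\psi(\phi_g(t,x))|\,dt\le \frac{LM^2}{2c}\,\delta_s^2 e^{-2c(T-T_0)}\le C\delta
\]
with $C=LM^2/(2c)$. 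This yields both well-definedness and the tail bound. Since $\delta_s$ can be shrunk freely, the bound can be made arbitrarily small uniformly in $(g,\psi)$, which is what the continuity proof will need.
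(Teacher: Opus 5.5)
Your proposal is correct and follows essentially the same route as the paper: a uniform entrance time into a small sublevel set $U(\delta')$ from Lemma~\ref{lem:uniform-decay}, then exponential decay from Lemma~\ref{lem:robust-stability}, then a bound on $\psi$ near the origin that makes the tail $O(\delta)$. You use the quadratic Taylor bound where the paper uses a Lipschitz bound, and you tie $\delta$ to a ball radius via the invariant ellipsoid, whereas the paper conflates $V_{f,\omega}\le\delta$ with $\|x\|\le\delta$. Your explicit forward-invariance argument for $g$ fills a step the paper skips, but apply the inward-pointing estimate on the level set $\{V_{f,\omega}=R\}$ of $J=K_2$ itself rather than on $\{V_{f,\omega}=R'\}$ with $R'>R$, since that level set lies outside the set on which $\|g-f\|_{C^1}$ is assumed small.
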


\begin{proof}
Fix $\delta>0$ sufficiently small as in Lemma~\ref{lem:uniform-decay}, and define
\[
U(\delta) := \{x \in \Rb^n : V_{f,\omega}(x) < \delta\}.
\]
Let $x \in K_2$ and define the hitting time
\[
T(\delta,x,g) := \inf \{ t \ge 0 : V_{f,\omega}(\phi_g(t,x)) \le \delta \}.
\]
From Lemma~\ref{lem:uniform-decay}, we have
\[
\frac{d}{dt} V_{f,\omega}(\phi_g(t,x)) \le -c_\delta
\quad \text{for } t \in [0, T(\delta,x,g)].
\]
Integrating and using $V_{f,\omega}(\phi_g(T(\delta,x,g),x)) = \delta$, we obtain
\[
T(\delta,x,g) \le \frac{V_{f,\omega}(x)-\delta}{c_\delta}.
\]
Since $V_{f,\omega}$ is bounded on $K_2$, there exists a constant $T_\delta>0$ such that
\[
T(\delta,x,g) \le T_\delta \quad \text{for all } x \in K_2.
\]
By construction, $U(\delta) \subset B(0,\delta_1)$ for sufficiently small $\delta$.  
From Lemma~\ref{lem:robust-stability}, there exist constants $M_1>0$ and $c_1>0$ such that for all $t \ge T(\delta,x,g)$,
\[
\|\phi_g(t,x)\|
\le
M_1 e^{-c_1 (t - T(\delta,x,g))} \|\phi_g(T(\delta,x,g),x)\|.
\]
Since $\phi_g(T(\delta,x,g),x) \in U(\delta)$, we have
\[
\|\phi_g(T(\delta,x,g),x)\| \le \delta,
\]
and therefore
\begin{equation}
\|\phi_g(t,x)\|
\le
M_1 \delta e^{-c_1 (t - T(\delta,x,g))},
\quad t \ge T(\delta,x,g).
\label{eq:traj-decay}
\end{equation}

Since $\psi$ lies in a bounded subset of $C^2(K_2)$, its gradient is uniformly bounded on $K_2$.  
Hence, there exists a constant $L>0$ such that
\[
|\psi(y)| \le L \|y\|, \quad \forall\, y \in K_2.
\]

Using \eqref{eq:traj-decay}, for $t \ge T(\delta,x,g)$,
\[
|\psi(\phi_g(t,x))|
\le L \|\phi_g(t,x)\|
\le L M_1 \delta e^{-c_1 (t - T(\delta,x,g))}.
\]
Therefore,
\begin{align*}
\int_{T(\delta,x,g)}^\infty |\psi(\phi_g(t,x))|\,dt
&\le
L M_1 \delta \int_0^\infty e^{-c_1 s}\,ds 
=
\frac{L M_1}{c_1} \delta.
\end{align*}

Define $C := \frac{L M_1}{c_1}$. Then for all $x \in K_2$,
\[
\int_{T(\delta,x,g)}^\infty |\psi(\phi_g(t,x))|\,dt \le C\delta.
\]
Since $T(\delta,x,g)$ is uniformly bounded over $K_2$, the integral defining $V_{g,\psi}(x)$ is finite for all $x \in J$.
\end{proof}

\subsection{Proof of Theorem~\ref{thm:continuity}}

\begin{proof}
Let $K \subset DOA(f)$ be compact. Define
\[
R := \max_{x \in K} V_{f,\omega}(x),
\quad
J := \{x \in \Rb^n : V_{f,\omega}(x) \le R\}.
\]
Then $J$ is compact, positively invariant under $f$, and satisfies
\[
K \subset J \subset DOA(f).
\]

Fix $\varepsilon > 0$. We will show that for $(g,\psi)$ sufficiently close to $(f,\omega)$ in $C^1(J)\times C^2(J)$,
\[
\|V_{g,\psi} - V_{f,\omega}\|_{\infty,J} < \varepsilon.
\]

For $x \in J$, we use the integral representation
\[
V_{f,\omega}(x) - V_{g,\psi}(x)
=
\int_0^\infty \bigl[\omega(\phi_f(t,x)) - \psi(\phi_g(t,x))\bigr]\,dt.
\]
We decompose the integrand:
\begin{align*}
|\omega(\phi_f(t,x)) - \psi(\phi_g(t,x))|
&\le
|\omega(\phi_f(t,x)) - \omega(\phi_g(t,x))| \\
&\quad +
|\omega(\phi_g(t,x)) - \psi(\phi_g(t,x))|.
\end{align*}

Fix $\delta>0$ and define
$T_\delta := \sup_{x \in J} T(\delta,x,g)$,
which is finite by Lemma~\ref{lem:integrability}.

For $t \in [0,T_\delta]$, the flows $\phi_f$ and $\phi_g$ remain in $J$.  
By Gr\"onwall inequality, there exists $C_1>0$ such that
\[
\sup_{x \in J,\, t \in [0,T_\delta]}
\|\phi_f(t,x) - \phi_g(t,x)\|
\le
C_1 \|f-g\|_{C^1(J)}.
\]

Since $\omega \in C^1(J)$, it is Lipschitz on $J$, so there exists $L_\omega>0$ such that
\[
|\omega(\phi_f(t,x)) - \omega(\phi_g(t,x))|
\le
L_\omega \|\phi_f(t,x) - \phi_g(t,x)\|.
\]
Hence,
\[
\int_0^{T_\delta} |\omega(\phi_f(t,x)) - \omega(\phi_g(t,x))|\,dt
\le
C_2 \|f-g\|_{C^1(J)}
\]
for some constant $C_2>0$.

Similarly,
\[
\int_0^{T_\delta} |\omega(\phi_g(t,x)) - \psi(\phi_g(t,x))|\,dt
\le
T_\delta \|\omega-\psi\|_{\infty,J}.
\]

For $t \ge T(\delta,x,g)$, Lemma~\ref{lem:integrability} yields
\[
\int_{T(\delta,x,g)}^\infty |\psi(\phi_g(t,x))|\,dt \le C\delta.
\]
Applying the same argument to $(f,\omega)$,
\[
\int_{T(\delta,x,f)}^\infty |\omega(\phi_f(t,x))|\,dt \le C\delta.
\]

Thus, the tail contribution satisfies
\[
\int_{T_\delta}^\infty |\omega(\phi_f(t,x)) - \psi(\phi_g(t,x))|\,dt
\le 2C\delta.
\]

Combining the estimates, we obtain
\[
\|V_{f,\omega} - V_{g,\psi}\|_{\infty,J}
\le
C_2 \|f-g\|_{C^1(J)}
+ T_\delta \|\omega-\psi\|_{\infty,J}
+ 2C\delta.
\]

First choose $\delta>0$ such that $2C\delta < \varepsilon/3$.  
Then choose $(g,\psi)$ sufficiently close to $(f,\omega)$ so that
\[
C_2 \|f-g\|_{C^1(J)} < \varepsilon/3,
\quad
T_\delta \|\omega-\psi\|_{\infty,J} < \varepsilon/3.
\]

It follows that
\[
\|V_{g,\psi} - V_{f,\omega}\|_{\infty,J} < \varepsilon.
\]

Therefore, $\Gc$ is continuous at $(f,\omega)$.
\end{proof}

\section{Numerical Results}

\begin{figure*}
    \centering
    \includegraphics[width=\linewidth]{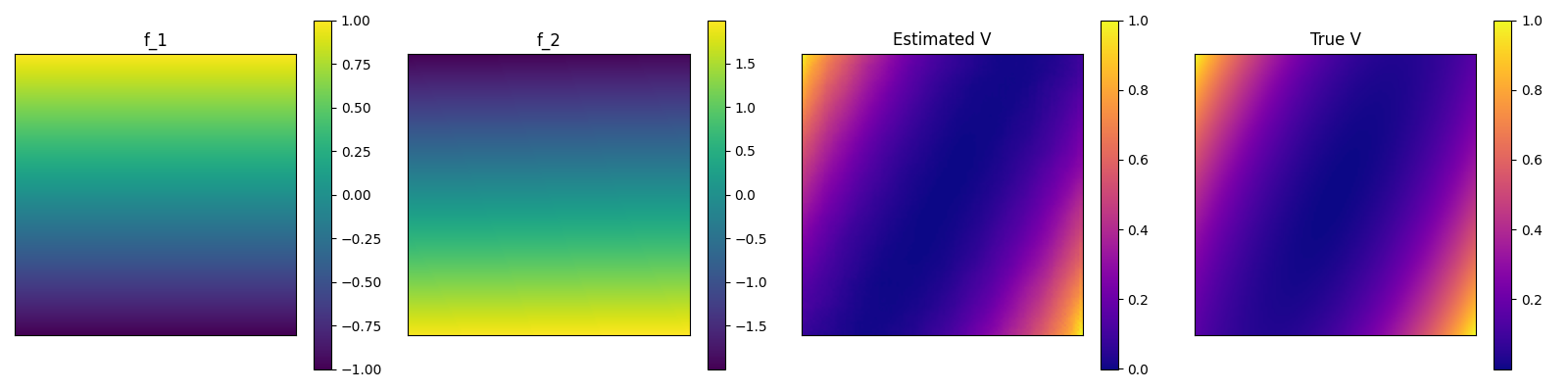}
    \includegraphics[width=\linewidth]{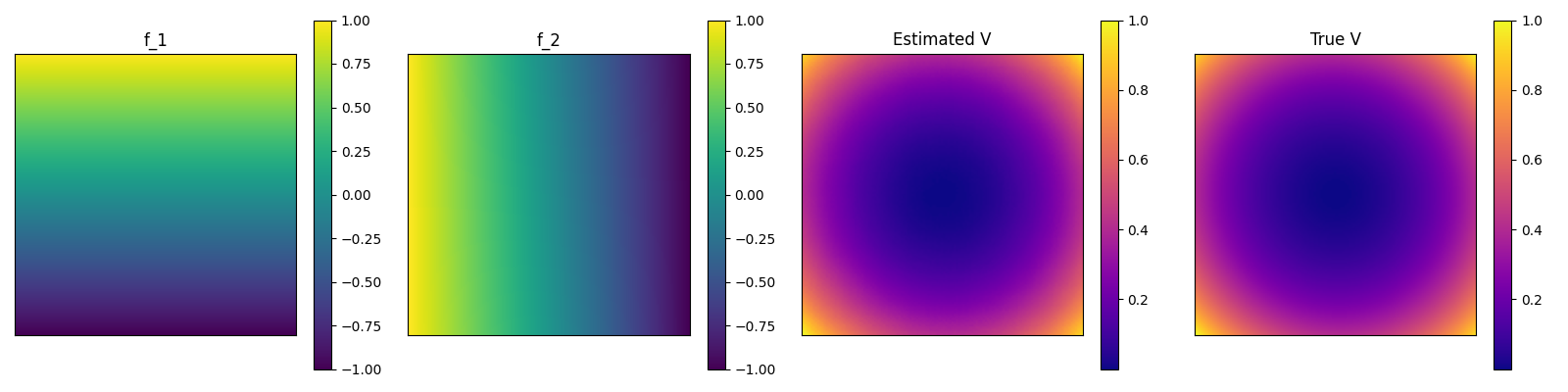}
    \includegraphics[width=\linewidth]{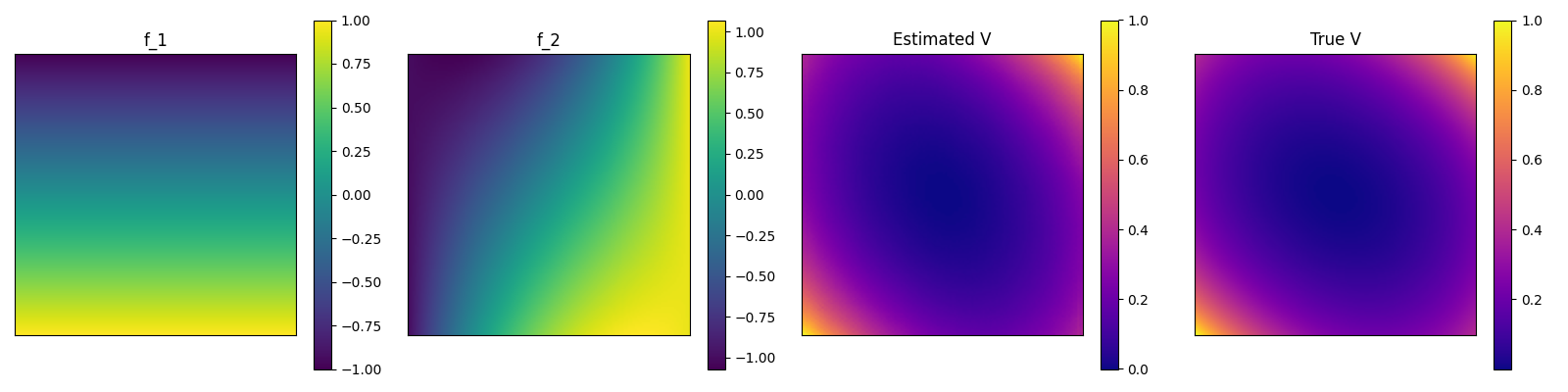}
    \caption{Comparison of learned Lyapunov functions (output $V$) against ground truth solutions (true $V$) for representative test cases across the inverted pendulum (Row 1), damped Duffing oscillator (Row 2), and Van der Pol oscillator (Row 3). Each subplot shows the input vector field components $(f_1,f_2)$ and the corresponding Lyapunov function values. The FNO reconstructions closely match the ground truth across all systems, closely matching the numerical reference solutions.}
    \label{fig:placeholder}
\end{figure*}

\subsection{Dataset Generation}

We constructed datasets for three representative nonlinear dynamical systems: the damped Duffing oscillator, the inverted pendulum, and the Van der Pol oscillator. For each system, 1,000 parameterized instances were generated, with parameters sampled from uniform distributions as specified below. Lyapunov functions were obtained by numerically solving the PDE $\dot V=-x^TQx$ using the LyZNet toolbox \cite{liu2024tool}. $Q$ is a randomly sampled positive definite matrix. Each dataset was divided into 800 training, 100 validation, and 100 test samples. Each system was projected onto the grid $[-1,1]^2$ divided into a 64x64 grid during training of the FNO.

\begin{itemize}
\item Damped Duffing Equation:
\begin{align}
    \dot{x}_1&=x_2,\\
    \dot{x}_2&=-\delta x_2-\alpha x_1-\beta x_1^3,
\end{align}
with parameters sampled as $\alpha\sim U(1,10),\beta\sim U(0.1,2.0),\delta\sim U(0.1,1.0)$.

\item Inverted Pendulum:
\begin{align}
    \dot\theta_1&=\theta_2,\\
    \dot\theta_2&=-c\theta_2-\frac{c}{l}\sin\theta_1,
\end{align}
with parameters $l\sim U(0.1,100),c\sim U(0.1,10)$

\item Van Der Pol:
\begin{align}
    \dot{x}_1&=-x_2,\\
    \dot{x}_2&=x_1-\mu x_2(1-x_1^2),
\end{align}
with parameters $\mu\sim U(0.01,1.0)$.

\end{itemize}

\subsection{Model Implementation}

We use the implementation of FNO by \cite{kovachki2021neural}. For our experiments, we adopt an extended version of the standard FNO by integrating Adaptive Instance Normalization (AdaIN, \cite{huang2017arbitrary}) to handle conditioning on the parameters $Q$. These parameters are flattened and passed through a small MLP to compute normalization constants that modulate the intermediate features at various stages of the network,
\begin{equation}
    \text{AdaIN}(x,\alpha(Q),\beta(Q))=\alpha(Q)\frac{x-\mu(x)}{\sigma(x)}+\beta(Q),
\end{equation}
where $\mu(x)$ and $\sigma(x)$ are the mean and standard deviation of the layer, and $\alpha(Q),\beta(Q)$ are trainable MLPs that map the matrix $Q$ to scalars that determine scaling and shifting in the normalized layer.

\subsection{Performance Comparison}

We compared the FNO with DeepONet \cite{lu2019deeponet} in learning solutions to the Lyapunov PDE. We assessed the performance of these models by evaluating them on the testing set using the relative $L_1$ error, which computes the $L_1$ error between the predicted and the true Lyapunov function and divides by the $L_1$ norm of the true Lyapunov function. This is the default evaluation metric used in \cite{kovachki2021neural,herde2024poseidon}.

Table \ref{tab:placeholder} summarizes the $L_1$ errors on the test set. FNO achieved a lower error (0.0182) compared to DeepONet (0.6483), demonstrating its improved ability to approximate Lyapunov functions.

\begin{table}[H]
    \centering
    \begin{tabular}{c|c}
        FNO & DeepONet \\
        \hline
        0.0182 & 0.6483
    \end{tabular}
    \caption{$L_1$ errors of FNO and DeepONet on learning the Lyapunov PDE}
    \label{tab:placeholder}
\end{table}
\subsection{Visualization of Learned Functions}

Figure \ref{fig:placeholder} illustrates test cases across the systems. For each case, we plot the learned Lyapunov function against the ground-truth solution. The FNO reconstructions closely follow the true solutions, capturing its structure.

\section{Conclusion}

In this work, we introduced a framework for learning Lyapunov functions of nonlinear dynamical systems using FNOs By formulating the Lyapunov condition as a PDE and leveraging the universality of FNOs on Sobolev spaces, we established theoretical guarantees ensuring that Lyapunov operators can be approximated with high fidelity. Our analysis demonstrated regularity and continuity properties that justify the application of neural operator learning in this setting.
Through numerical experiments on nonlinear systems, we showed that FNOs achieve substantially lower approximation error compared to DeepONets.



A key challenge is extending this framework to higher-dimensional dynamical systems. In principle, the universality of FNOs extends naturally to $\Rb^n$, but practical implementation quickly becomes computationally prohibitive. Even for 3D problems, Fast Fourier Transforms (FFTs) scale as $O(N^3\log N)$ in time and $O(N^3)$ in memory, where $N$ is the resolution along each dimension \cite{van1992computational}. This growth imposes severe memory and runtime bottlenecks during training. 

Finally, combining learned Lyapunov functions with controller design remains a promising avenue. By embedding Lyapunov certificates into feedback synthesis, one may learn controllers with formal guarantees. This would bridge data-driven stability analysis with practical control implementation, further motivating the development of scalable and interpretable neural operator architectures. 
An early step in this direction was accomplished using diffusion models \cite{mukherjee2025manifold}.

\bibliography{ieee}
\bibliographystyle{abbrv}

\end{document}